\newtheorem{thm}{Theorem}[section]
\newtheorem{lem}[thm]{Lemma}
\newtheorem{prop}[thm]{Proposition}
\newtheorem{conj}[thm]{Conjecture}
\theoremstyle{definition}
\newtheorem{quest}{Question}
\begin{document}
\title{Rings of differential operators on curves}
\author{Jason P. Bell}
\thanks{The research of the first-named author was supported by NSERC grant 31-611456.}  
\keywords{GK dimension, birational equivalence, quotient division rings, rings of differential operators, gap theorems}
\subjclass[2000]{16P90, 16S32, 16S38}

\address{Jason Bell\\
Department of Mathematics\\
Simon Fraser University\\
Burnaby, BC, V5A 1S6\\ CANADA}
\email{jpb@math.sfu.ca}
\author{Agata Smoktunowicz}
\thanks{The research of the second-named
author was supported by Grant No. EPSRC
 EP/D071674/1.}
\address{Agata Smoktunowicz\\
Maxwell Institute for Mathematical Sciences\\
School of Mathematics, University of Edinburgh\\
James Clerk Maxwell Building, King's Buildings, Mayfield Road\\
Edinburgh EH9 3JZ, Scotland\\
 UK}
 \email{A.Smoktunowicz@ed.ac.uk}

\bibliographystyle{plain}

\begin{abstract}
Let $k$ be an algebraically closed field of characteristic $0$ and let $A$ be a finitely generated $k$-algebra that is a domain whose Gelfand-Kirillov dimension is in $[2,3)$.  We show that if $A$ has a nonzero locally nilpotent derivation then $A$ has quadratic growth.  In addition to this, we show that $A$ either satisfies a polynomial identity or $A$ is isomorphic to a subalgebra of $\mathcal{D}(X)$, the ring of differential operators on an irreducible smooth affine curve $X$, and $A$ is birationally isomorphic to $\mathcal{D}(X)$.
\end{abstract}
\maketitle
\section{Introduction}
Given a field $k$ and a $k$-algebra $A$, a \emph{derivation} $\delta$ of $A$ is a map from $A$ to itself satisfying the Leibniz rule: $\delta(ab)=a\delta(b)+b\delta(a)$ for $a,b\in A$.  
A derivation $\delta$ of $A$ is called \emph{locally nilpotent} if for each $a\in A$ there is some natural number $n$ such that $\delta^n(a)=0$.

The study of locally nilpotent derivations has seen wide use in affine algebraic geometry.  Makar-Limanov \cite{ML} answered a problem of Koras and Russell \cite{KR} by showing that the smooth affine threefold given by the zero set of $X(1+XY)+Z^2+T^3$ in $\mathbb{C}^4$ is not isomorphic to $\mathbb{C}^3$ by considering the intersection of the kernels of all locally nilpotent derivations for both rings.  This invariant is now called the \emph{Makar-Limanov} invariant.  Derksen \cite{D} gave an alternative proof of Makar-Limanov's result by constructing another invariant---the subalgebra generated by all kernels of locally nilpotent derivations---that is now called the \emph{Derksen invariant}.

Since their initial applications, both the Makar-Limanov and Derksen invariants have seen widespread use in the study of the Zariski cancellation problem, which asks whether it follows that two affine complex varieties $X$ and $Y$ with the property that $X\times \mathbb{C}\cong Y\times \mathbb{C}$ are isomorphic.  

It is natural to ask whether the study of locally nilpotent derivations could have similar applications to the study of algebras that are not commutative.  We begin these investigations by considering domains of low Gelfand Kirillov dimension.  Given a finitely generated $k$-algebra $A$, the \emph{Gelfand-Kirillov dimension} (GK dimension for short) of $A$ is defined to be
$${\rm GKdim}(A) \ := \ \limsup_{n\to\infty} \frac{\log\, {\rm dim}(V^n)}{\log\,n},$$
where $V$ is a finite-dimensional $k$-vector subspace of $A$ that contains $1$ and generates $A$ as a $k$-algebra.  We note that this definition is independent of choice of vector space $V$ having the aforementioned properties.  In the case that $A$ is a finitely generated commutative algebra, the Gelfand-Kirillov dimension and the Krull dimension of $A$ are the same.  In this sense, the study of finitely generated noncommutative domains of GK dimension two can be seen as the noncommutative counterpart of the study of affine surfaces.  

Our main result is that we give a \emph{birational classification} of finitely generated domains of GK dimension $2$ that have a nonzero locally nilpotent derivation.  Given a domain $A$ of finite GK dimension, one can invert the nonzero elements of $A$ to form the \emph{quotient division ring} of $A$, which is denoted by $Q(A)$ and is a noncommutative analogue of the field of fractions of a commutative domain.  One of the deep problems in noncommutative algebra is to classify the quotient division rings of finitely generated domains of GK dimension $2$ up to isomorphism.  In analogy with classical algebraic geometry, such a classification is known as a \emph{birational classification}, and if the quotient division rings of two Ore domains $A$ and $B$ are isomorphic, we say that $A$ and $B$ are \emph{birationally isomorphic}.

In \S 2, we briefly describe Artin's proposed birational classification of graded domains of GK dimension $3$.  The classification of graded domains of GK dimension $3$ is intimately related to the classification of ungraded domains of GK dimension $2$, and is the noncommutative analogue of the Castelnuovo-Enriques birational classification of projective surfaces.  

One of the components of Artin's classification is formed by rings of differential operators on curves.  Given a smooth curve $X$ over an algebraically closed field $k$ of characteristic zero such that the ring of regular functions of $X$ is $A$, the  \emph{ring of differential operators} on $X$ is the subalgebra of ${\rm End}(A)$ formed by $A$ (regarded as $k$-linear endomorphisms coming from left multiplication) and by the $k$-linear derivations of $A$.  We denote this algebra by $\mathcal{D}(X)$.  Arguably, the most well-known example of a ring of differential operators is the Weyl algebra, which is the ring of differential operators on $X=\mathbb{A}^1(k)$.  In this case, the ring of regular functions is $k[x]$ and the derivations are a free $k[x]$-module spanned by the operator $y$, which is differentiation with respect to $x$.  This algebra has the relation $yx-xy=1$.  In the case that $X$ is not smooth, the definition is somewhat more technical, and we refer the reader to McConnell and Robson \cite{McR} for further reading.

Our first result gives a birational classification of finitely generated domains of GK dimension less than $3$ over an algebraically closed field of characteristic zero that have a nonzero locally nilpotent derivation.  We show that such rings are either subalgebras of the ring of  differential operators on a smooth irreducible affine curve or they satisfy a polynomial identity.  In the former case, we in fact show that the algebra is birationally isomorphic to the ring of differential operators on a curve.  The significance of satisfying a polynomial identity is that it implies that the quotient division algebra is finite-dimensional over its centre \cite[Theorem 13.6.5]{McR}. 
\begin{thm} \label{thm: main2} Let $k$ be an algebraically closed field of characteristic $0$ and let $A$ be a finitely generated $k$-algebra that is a domain of GK dimension strictly less than three.  If $A$ has a nonzero locally nilpotent derivation then $A$ either satisfies a polynomial identity or there is a smooth affine irreducible curve $X$ over $k$ such that:
\begin{enumerate}
\item $A$ is isomorphic to a subalgebra of $\mathcal{D}(X)$, the ring of differential operators on $X$;
\item $A$ is birationally isomorphic to $\mathcal{D}(X)$.
\end{enumerate}
\end{thm}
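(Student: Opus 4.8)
The plan is to reduce everything to the structure of $A$ after localizing along the nonzero locally nilpotent derivation $\delta$, and then to separate the two conclusions according to whether a derivation induced on a function field vanishes. First I would dispose of the polynomial identity alternative abstractly: by the Small--Warfield theorem a finitely generated domain of GK dimension $1$ is PI, and by Bergman's gap theorem no algebra has GK dimension strictly between $1$ and $2$; since a GK dimension $0$ domain is finite-dimensional and hence PI, it follows that if $A$ is not PI then ${\rm GKdim}(A)\in[2,3)$. It therefore suffices to treat the non-PI case and to produce the curve $X$.

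The first substantive step is to manufacture a local slice for $\delta$. Choosing $b$ with $\delta(b)\neq 0$ and letting $n$ be maximal with $\delta^n(b)\neq 0$, the element $a=\delta^{n-1}(b)$ satisfies $s:=\delta(a)\in\ker\delta$, $s\neq 0$, and $\delta^2(a)=0$. Since a domain of finite GK dimension has no free subalgebra on two generators and is therefore Ore, the powers of $s$ form a denominator set; as $\delta(s)=0$ the derivation extends to $A[s^{-1}]$, and $t:=s^{-1}a$ satisfies $\delta(t)=1$. A Dixmier-type argument then shows that $A[s^{-1}]$ is free as a left $B$-module on the powers of $t$, where $B:=\ker(\delta|_{A[s^{-1}]})$, and that $[t,b]\in B$ for $b\in B$; hence $A[s^{-1}]=B[t;\partial]$ is an Ore extension with $\partial:=[t,-]$ a $k$-derivation of $B$. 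Because ${\rm GKdim}(R[t;\partial])={\rm GKdim}(R)+1$ and localizing at the powers of a single element preserves GK dimension, I obtain ${\rm GKdim}(A)={\rm GKdim}(B)+1$. With ${\rm GKdim}(A)<3$, Bergman's gap forces ${\rm GKdim}(B)\in\{0,1\}$; the value $0$ gives $A[s^{-1}]=k[t;\partial]$, whence $A$ is PI, so in the remaining case ${\rm GKdim}(B)=1$, ${\rm GKdim}(A)=2$, and the linear growth of $B$ yields the quadratic growth of $A$.

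Now $B$ is a GK dimension one domain, so its quotient ring $Q(B)$ is a division algebra finite over its centre $F_0$, a field of transcendence degree $1$ over $k$, and $\partial$ extends to a derivation $\partial_0$ of $F_0$. I would then split on $\partial_0$. If $\partial_0=0$, then $\partial$ is a derivation of $Q(B)$ trivial on its centre, hence inner, say $\partial={\rm ad}(c)$; replacing $t$ by $t-c$ makes the new variable commute with $Q(B)$, so $Q(A)\cong Q(B)\otimes_{F_0}F_0(t-c)$ is finite over its centre and $A$ is PI. If $\partial_0\neq 0$, let $X$ be the smooth affine model of $F_0$; since the derivations of $F_0$ form a one-dimensional $F_0$-space, $Q(\mathcal{D}(X))\cong Q(F_0[D;\partial_0])$, and I would construct a $k$-algebra embedding $A\hookrightarrow\mathcal{D}(X)$ carrying $B$ into $\mathcal{O}(X)$ and $t$ to a suitable first-order operator, inducing an isomorphism of quotient rings.

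The hard part is precisely this last case when $Q(B)\neq F_0$, that is, when the kernel is genuinely noncommutative: one must show that the finite-dimensional division algebra $Q(B)$ over the function field $F_0$ obstructs neither the embedding into $\mathcal{D}(X)$ nor the birational identification $Q(A)\cong Q(\mathcal{D}(X))$, equivalently that its Brauer class is trivialized once one adjoins the differential variable $t$ with $\partial_0\neq 0$. Establishing that the coefficient division ring may be taken to be $F_0$ itself --- while simultaneously arranging that $X$ is honestly smooth, affine and irreducible with $\mathcal{O}(X)$ receiving $B$ --- is where the real work lies; the slice construction and the gap/PI dichotomy above are comparatively formal.
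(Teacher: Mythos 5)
Your slice construction (finding $a$ with $\delta^2(a)=0$, normalizing to get $\delta(t)=1$, and Taylor-expanding to write everything over the kernel) does track the paper's Lemma~\ref{lem: 2}, but the reduction you build on it has a genuine gap. Everything hinges on the chain ${\rm GKdim}(B)\le {\rm GKdim}(A[s^{-1}])-1={\rm GKdim}(A)-1<2$, and both links are unjustified. The powers of the non-central element $s$ need not form an Ore set in $A$ (being an Ore \emph{domain} gives $aA\cap s^nA\neq 0$, not the Ore condition for the multiplicative set $\{s^n\}$), so ``$A[s^{-1}]$'' must be read as the subalgebra of $Q(A)$ generated by $A$ and $s^{-1}$; but then there is no reason this subalgebra still has GK dimension less than three, since noncommutative localization can increase GK dimension dramatically (already the quotient division ring of the Weyl algebra contains free subalgebras, by Makar-Limanov), and you give no argument that inverting $s$ avoids this. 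Without that bound you cannot control the kernel $B$, and the dichotomy ${\rm GKdim}(B)\in\{0,1\}$ collapses. The paper obtains the needed control by an entirely different mechanism: it works with the kernel $E$ of the extended derivation inside $Q(A)$ itself and invokes a nontrivial theorem on division subalgebras of quotient division rings of domains of GK dimension less than three \cite[Theorem 1.4]{Be2}, with Lemma~\ref{lem: 1} ruling out the alternative that $Q(A)$ is finite-dimensional over $E$. Nothing in your outline substitutes for this input.

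You have also misplaced the ``real work.'' The obstruction you defer to --- that $Q(B)$ might be a noncommutative division algebra with nontrivial Brauer class over the function field $F_0$ --- is vacuous: since $k$ is algebraically closed, Tsen's theorem forces any division algebra finite over a field of transcendence degree one over $k$ to be commutative, so $Q(B)=F_0$ for free (correspondingly, $E$ is a field in the paper). The genuine difficulty, hidden in your phrase ``construct a $k$-algebra embedding $A\hookrightarrow\mathcal{D}(X)$ carrying $B$ into $\mathcal{O}(X)$,'' is that the kernel ($B$, or $E$) is not known to be finitely generated, so there is no affine model of $F_0$ that is guaranteed to receive it or even the coefficients of generators of $A$. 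The paper's Lemma~\ref{lem: 3} (a growth argument showing $\sum_i\mu^i(u)k[x]$ cannot be direct inside an algebra of GK dimension less than three) and Proposition~\ref{prop: 1} (clearing denominators via $z=[T,x]$ to build a finitely generated $\mu$-stable coordinate ring $C$ with $A\subseteq C[T;\mu]$ and $Q(A)=Q(C[T;\mu])$) exist precisely to carry out this descent, after which one localizes $C$ to make the curve smooth with a free rank-one module of derivations. Your proposal contains no replacement for this step.
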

We note that the ring of differential operators on an irreducible affine algebraic curve always has ad-nilpotent elements coming from the elements of the ring of regular functions, and hence it always has non-trivial locally nilpotent derivations.

Unlike Krull dimension, GK dimension possesses the pathological property that in general it need not take integer values; indeed, for each real number $\alpha\ge 2$, there exists an algebra whose GK dimension is precisely $\alpha$ \cite[Theorem 3.14]{KL}.  Despite these pathological examples, there are no known examples of domains whose GK dimension is not either a nonnegative integer or infinite.  The second-named author showed that finitely generated connected graded domains of GK dimension at most $3$ have integer GK dimension.  As a corollary of Theorem \ref{thm: main2}, we obtain a similar gap theorem for domains that have a nonzero locally nilpotent derivation.
\begin{thm}\label{thm: main1} Let $k$ be an algebraically closed field of characteristic $0$ and let $A$ be a finitely generated $k$-algebra that is a domain with GK dimension in $[2,3)$.  If $A$ has a nonzero locally nilpotent derivation then $A$ has GK dimension $2$.
\end{thm}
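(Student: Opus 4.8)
The plan is to deduce this statement directly from Theorem~\ref{thm: main2}, which already performs all of the structural work; what remains is to observe that each of the two alternatives it provides forces the Gelfand--Kirillov dimension to be the integer $2$. Concretely, I would apply Theorem~\ref{thm: main2} to $A$ and then dispose of its two cases separately, in each case using the hypothesis ${\rm GKdim}(A)\in[2,3)$ to pin the dimension down.

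In the first case $A$ satisfies a polynomial identity. Here I would invoke the fact that the Gelfand--Kirillov dimension of a finitely generated PI algebra is always a nonnegative integer (see Krause--Lenagan \cite{KL}). Since by hypothesis ${\rm GKdim}(A)\in[2,3)$, the only integer available in this interval is $2$, and hence ${\rm GKdim}(A)=2$.

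In the second case there is a smooth affine irreducible curve $X$ such that $A$ is isomorphic to a subalgebra of $\mathcal{D}(X)$. The essential input here is that ${\rm GKdim}(\mathcal{D}(X))=2$: filtering $\mathcal{D}(X)$ by order of differential operators, the associated graded ring is the coordinate ring of the cotangent bundle $T^*X$, a commutative affine domain of Krull dimension $2\dim X=2$, and the GK dimension of $\mathcal{D}(X)$ agrees with that of its associated graded ring. Since GK dimension is monotone under passage to subalgebras, I obtain ${\rm GKdim}(A)\le {\rm GKdim}(\mathcal{D}(X))=2$, and combining this with the standing inequality ${\rm GKdim}(A)\ge 2$ gives ${\rm GKdim}(A)=2$. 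Note that the birational assertion (2) of Theorem~\ref{thm: main2} is not even needed for this argument: the subalgebra embedding (1) already caps the dimension from above.

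The genuine mathematical difficulty is thus entirely contained in Theorem~\ref{thm: main2}; once that dichotomy is available, Theorem~\ref{thm: main1} reduces to assembling two standard facts. If I had to flag a potential obstacle in this final step, it would be checking that the two cited inputs apply verbatim---that the PI integrality theorem is stated for finitely generated (rather than only affine commutative) algebras, and that the associated graded of $\mathcal{D}(X)$ really is the smooth two-dimensional cotangent-bundle ring under the sole hypothesis that $X$ is smooth affine irreducible. Both are routine, so I expect no serious trouble beyond careful citation.
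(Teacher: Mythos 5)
Your argument is correct and follows the same skeleton as the paper's: reduce to the structural dichotomy, then observe that each branch pins the dimension at $2$. The paper, however, routes through the intermediate Proposition \ref{prop: 1} rather than Theorem \ref{thm: main2} itself: it places $A$ inside $C[T;\mu]$ for a finitely generated (possibly singular) commutative $C$ of Krull dimension one and quotes Zhang \cite{Z} to conclude that every finitely generated subalgebra of $C[T;\mu]$ has GK dimension $2$ --- indeed quadratic growth, the sharper conclusion advertised in the introduction --- and in the PI case it cites \cite[Proposition 13.10.6]{McR} for quadratic growth rather than the bare integrality of the GK dimension of affine PI algebras. Your two substitutes (integrality of GK dimension for finitely generated PI domains from \cite{KL}, and ${\rm GKdim}(\mathcal{D}(X))=2$ together with monotonicity of GK dimension under subalgebras) are both valid and suffice for the statement as written, and there is no circularity since the paper's proof of Theorem \ref{thm: main2} does not use Theorem \ref{thm: main1}. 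The one point needing care is the filtered--graded transfer in your second case: the order filtration on $\mathcal{D}(X)$ has $F_0=k[X]$ infinite-dimensional, so the comparison ``${\rm GKdim}$ of a filtered algebra equals that of its associated graded'' must be invoked in a version that tolerates infinite-dimensional filtration components; alternatively, and more in the spirit of the paper, one can simply note $\mathcal{D}(X)\cong k[X][T;\delta_0]$ and apply the same Ore-extension result of Zhang. The trade-off is that your route yields only ${\rm GKdim}(A)=2$, whereas the paper's yields quadratic growth.
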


In fact, in Theorem \ref{thm: main1} we are able to show that $A$ has \emph{quadratic growth}, which means that $A$ has a finite-dimensional subspace $V$ that contains $1$ and generates $A$ as an algebra and has the additional property that there exist positive constants $C_1$ and $C_2$ such that $C_1n^2 <{\rm dim}(V^n) < C_2n^2$ for all natural numbers.

The outline of this paper is as follows.  In \S2, we briefly describe Artin's proposed birational classification of noetherian domains of GK dimension $3$.  In \S3 we give the proofs of Theorems \ref{thm: main2} and \ref{thm: main1}.

\section{Artin's proposed birational classification}
In this section, we briefly describe Artin's \cite{Ar} proposed birational classification of a certain class of graded domains of GK dimension $3$ and its relation to Theorem \ref{thm: main2}.  We note that Artin warns the reader that ``everything should be taken with a grain of salt,'' as far as his proposed classification is concerned. 

To begin, we let $A$ be a graded noetherian domain of GK dimension $3$ that is generated in degree $1$ and we let $\mathcal{C}$ denote the category of finitely generated graded right $A$-modules modulo the subcategory of torsion modules.  (This can be thought of as the category of ``tails'' of finitely generated graded $A$-modules.)  We let ${\rm Proj}(A)$ denote the triple $(\mathcal{C},\mathcal{O},s)$, where $\mathcal{O}$ is the image of the right module $A$ in $\mathcal{C}$ and $s$ is the autoequivalence of $\mathcal{C}$ defined by the shift operator on graded modules.  

We note that $A$ has a \emph{graded quotient division ring}, denoted by $Q_{\rm gr}(A)$, which is formed by inverting the nonzero homogeneous elements of $A$.  Then there is a division ring $D$ and automorphism $\sigma$ of $D$ such that 
$$Q_{\rm gr}(A)\cong D[t,t^{-1};\sigma].$$  We think of $D$ as being the function field of $X={\rm Proj}(A)$.

Artin gives a proposed classification of the type of division rings $D$ that can occur when $A$ is a complex noetherian domain of GK dimension $3$.  (There are also a few other technical homological assumptions that he assumes the algebra possesses, but these are beyond the scope of this paper; we refer the interested paper to Artin's paper \cite{Ar}.)   If $A$ has these properties and $Q_{\rm gr}(A)\cong D[t,t^{-1};\sigma]$, the Artin asserts that up to isomorphism the possible division rings $D$ must satisfy at least one of the conditions on the list:
\begin{enumerate}
\item $D$ is finite-dimensional over its centre, which is a finitely generated extension of $\mathbb{C}$ of transcendence degree $2$;
\item $D$ is birationally isomorphic to a quantum plane; that is, $D$ is isomorphic to the quotient division of the complex domain generated by $x$ and $y$ with relation $xy=qyx$ for some nonzero $q\in \mathbb{C}$;
\item $D$ is isomorphic to the Skylanin division ring (see Artin \cite{Ar} for relevant definitions);
\item $D$ is isomorphic to the quotient division ring of the Weyl algebra;
\item $D$ is birationally isomorphic to $K[t;\sigma]$ or $K[t;\delta]$, where $K$ is a finitely generated field extension of $\mathbb{C}$ of positive genus and $\sigma$ and $\delta$ are respectively an automorphism and a derivation of $K$.   
\end{enumerate}
We note that the Weyl algebra and the rings of the form $K[t;\delta]$ with $K$ a finitely generated field extension of $\mathbb{C}$ and $\delta$ a derivation of $K$ are birationally isomorphic to a ring of differential operators of a smooth curve.  

We note that in many cases one can homogenize certain domains of GK dimension $2$ using a central indeterminate to obtain a graded domain of GK dimension $3$ satisfying the conditions that Artin assumes; indeed, many of the division rings on Artin's list are quotient division rings of noetherian domains of GK dimension $2$.  In this sense, there is a strong relationship between a birational classification of noetherian domains of GK dimension $2$ and graded noetherian domains of GK dimension $3$.  
\section{Classification theorem}
In this section we prove Theorems \ref{thm: main2} and \ref{thm: main1}.
We note that if $A$ is a Goldie domain with derivation $\delta$, then there is a natural extension of $\delta$ to the quotient division ring $Q(A)$ of $A$ via the quotient rule.  

We begin with a simple lemma.  We note that this lemma follows from the work of Bavula \cite[Lemma 2.1]{Ba}. 
\begin{lem} Let $k$ be a field of characteristic $0$ and let $A$ be a finitely generated $k$-algebra that is an Ore domain.  Suppose that $\delta :A \to A$ is a nonzero locally nilpotent derivation and let $\widehat{\delta}:Q(A) \to Q(A)$ be its unique extension to a derivation of the quotient division ring of $A$.  Then the kernel of $\widehat{\delta}$ is a division subalgebra $E$ of $Q(A)$ and $Q(A)$ is infinite-dimensional as a left $E$-vector space.
\label{lem: 1}
\end{lem}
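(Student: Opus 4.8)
The plan is to split the statement into two parts. First I would verify that $E=\ker\widehat\delta$ is a division subalgebra. That it is a $k$-subalgebra is immediate from the Leibniz rule together with $\widehat\delta(1)=0$ and $k$-linearity. To see that it is closed under inverses, I would take a nonzero $q\in E$ and differentiate the identity $q\,q^{-1}=1$: since $\widehat\delta(1)=0$ and $\widehat\delta(q)=0$, the Leibniz rule gives $q\,\widehat\delta(q^{-1})=0$, and because $Q(A)$ is a division ring this forces $\widehat\delta(q^{-1})=0$, i.e. $q^{-1}\in E$. Thus $E$ is a division subalgebra of $Q(A)$.

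For the infinite-dimensionality the key is to manufacture a \emph{slice}, that is, an element $p\in Q(A)$ with $\widehat\delta(p)=1$. Since $\delta\neq 0$ there is some $a\in A$ with $\delta(a)\neq 0$; local nilpotence then lets me choose $n\ge 2$ with $\delta^{n}(a)=0$ but $\delta^{n-1}(a)\neq 0$, and on setting $t=\delta^{n-2}(a)$ I obtain $\delta^2(t)=0$ and $s:=\delta(t)\neq 0$. Then $\delta(s)=\delta^2(t)=0$, so $s\in E$, and hence $s^{-1}\in E$ by the first part. Differentiating $s\,s^{-1}=1$ shows $\widehat\delta(s^{-1})=0$, so for $p:=t\,s^{-1}$ the Leibniz rule yields $\widehat\delta(p)=\delta(t)\,s^{-1}=s\,s^{-1}=1$.

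Finally I would show that the powers $1,p,p^2,\dots$ are left linearly independent over $E$, which gives the conclusion. A short induction, using only that $p$ commutes with itself, gives $\widehat\delta(p^i)=i\,p^{\,i-1}$, and this is exactly where characteristic $0$ enters, since the integer $i$ must be invertible. Suppose for contradiction there were a nontrivial left $E$-relation $\sum_{i=0}^{N} e_i p^i=0$ with $e_N\neq 0$, chosen of minimal degree $N$. Applying $\widehat\delta$ and using $e_i\in E$ (so that $\widehat\delta(e_i)=0$) produces $\sum_{i=1}^{N} i\,e_i p^{\,i-1}=0$, a relation of degree $N-1$ whose top coefficient $N e_N$ is nonzero; this contradicts minimality unless $N=0$, and $N=0$ forces $e_0=0$, again a contradiction. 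Hence $Q(A)$ is infinite-dimensional as a left $E$-vector space. I expect the only delicate point to be the non-commutativity of $Q(A)$: throughout I must keep the scalars $e_i$ on a fixed (left) side and avoid tacitly assuming that $s$, $t$, or $p$ commute with elements of $E$. The computations above are arranged precisely so that neither the construction of the slice nor the descent on relations ever needs such commutation.
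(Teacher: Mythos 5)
Your proposal is correct and follows essentially the same route as the paper: construct a slice $p$ with $\widehat{\delta}(p)=1$ from an element annihilated by $\delta^2$ but not $\delta$ (the paper's $b_0=b\,\widehat{\delta}(b)^{-1}$ is exactly your $t\,s^{-1}$), then derive a contradiction from a minimal-degree left $E$-relation among the powers of $p$ by applying $\widehat{\delta}$. You merely spell out details the paper leaves to the reader, namely that $E$ is closed under inverses and the induction giving $\widehat{\delta}(p^i)=i\,p^{i-1}$.
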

\begin{proof} It is easily checked that the kernel of $\widehat{\delta}$ is a division ring and hence $E$ is a division subalgebra of $Q(A)$.  Since $\delta$ is nonzero, there exists some $a\in A$ that is not in $E$.  Let $m$ be the smallest natural number such that $\delta^m(a)\in E$.  Let $b=\delta^{m-1}(a)$.   Then $\delta(b)\in E$ is nonzero, as otherwise, $b\in {\rm ker}(\delta)\subseteq E$. 
Let $b_0=b \widehat{\delta}(b)^{-1}$.  Then $$\widehat{\delta}(b_0)=1.$$

Suppose that $Q(A)$ is finite-dimensional as a left $E$-vector space.  Then there exists some natural number $d$ and $e_d,\ldots ,e_0\in E$, with $e_0,\ldots ,e_d$ not all zero, such that
$$e_d b_0^d+e_{d-1}b_0^{d-1}+\cdots +e_0=0.$$
We may assume that $d$ is minimal with respect to having such a non-trivial relation.
Applying $\widehat{\delta}$ gives
$$de_d  b_0^{d-1}+\cdots + e_1 \ = \ 0.$$  By minimality of $d$, we have $e_d=\cdots = e_1=0$, but this immediately gives that $e_0=0$ as well, which is a contradiction.  The result follows.
\end{proof}
We next give a result which embeds $A$ in a skew polynomial extension in the case that $A$ has a nonzero locally nilpotent derivation, many parts of this lemma follow again from the work of Bavula \cite[Lemma 2.1]{Ba}, with the added ingredient coming from a result about subfields of division rings \cite{Be2}.
\begin{lem} Let $k$ be an algebraically closed field of characteristic $0$ and let $A$ be a finitely generated $k$-algebra that is a domain of GK dimension less than three.  Suppose that $\delta :A \to A$ is a nonzero locally nilpotent derivation and that $A$ does not satisfy a polynomial identity and let $E$ denote the kernel of the natural extension of $\delta$ to $Q(A)$.  Then $E$ is a field extension of $k$ of transcendence degree one and there exists some $y\in A\setminus E$ such that:
\begin{enumerate}
\item $[y,x]\in E$ for all $x\in E$;
\item $$A\subseteq \sum_{i\ge 0} Ey^i;$$ 
\item the algebra generated by $E$ and $y$ is isomorphic to $E[T;\mu]$ for some derivation $\mu$ of $E$ and with $y$ being sent to $T$ under the isomorphism;
\item after identifying $A$ with its image in $E[T;\mu]$ we have $Q(A)=Q(E[T;\mu])$.
\end{enumerate}
\label{lem: 2}
\end{lem}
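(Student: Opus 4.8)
The plan is to first use Lemma~\ref{lem: 1} to produce an element $b_0\in Q(A)$ with $\widehat{\delta}(b_0)=1$, to build a skew-polynomial picture of $Q(A)$ around $b_0$, and only afterwards to replace $b_0$ by a genuine element of $A$; the assertions that $E$ is a field of transcendence degree one will be isolated as a separate, harder step. To begin I would record the calculus of $b_0$. Since $\widehat{\delta}(b_0)=1$ is central, an induction gives $\widehat{\delta}(b_0^{\,i+1})=(i+1)b_0^{\,i}$, and for $e\in E$ the identity $\widehat{\delta}([b_0,e])=[\widehat{\delta}(b_0),e]+[b_0,\widehat{\delta}(e)]=0$ shows $[b_0,e]\in E$. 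Hence $\mu(e):=[b_0,e]$ is a derivation of $E$. Repeating the argument of Lemma~\ref{lem: 1} (apply $\widehat{\delta}$ to a minimal left $E$-linear dependence among the $b_0^{\,i}$) shows that the powers of $b_0$ are left $E$-linearly independent, so the subalgebra of $Q(A)$ generated by $E$ and $b_0$ is isomorphic to the Ore extension $E[T;\mu]$ under $b_0\mapsto T$.

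Next I would show $A\subseteq\sum_{i\ge 0}Eb_0^{\,i}=E[b_0;\mu]$ by an ``integration'' argument on the order of nilpotence. If $\widehat{\delta}^{\,n+1}(a)=0$ and, inductively, $\widehat{\delta}(a)=\sum_{i=0}^{n-1}e_ib_0^{\,i}$ with $e_i\in E$, then in characteristic zero the element
$$a-\sum_{i=0}^{n-1}\frac{1}{i+1}\,e_ib_0^{\,i+1}$$
is annihilated by $\widehat{\delta}$ and therefore lies in $E$, so $a\in\sum_{i=0}^{n}Eb_0^{\,i}$. Because $\delta$ is locally nilpotent on $A$, every element of $A$ is reached, and we obtain $A\subseteq E[b_0;\mu]$ and hence $Q(A)=Q(E[b_0;\mu])$.

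The main obstacle is to prove that $E$ is a \emph{commutative} field of transcendence degree exactly one; this is the only place where the hypotheses $\mathrm{GKdim}(A)<3$ and ``$A$ is not PI'' are used. First, \cite{Be2} bounds the transcendence degree of commutative subfields of $Q(A)$: for a finitely generated non-PI domain of GK dimension less than three, no commutative subfield has transcendence degree two, so the centre $Z(E)$ has transcendence degree at most one. To see that $E=Z(E)$, I would split into two cases. If $E$ is finite-dimensional over $Z(E)$, then since $Z(E)$ has transcendence degree at most one over the algebraically closed field $k$, Tsen's theorem gives that the Brauer group of $Z(E)$ is trivial, so the central division algebra $E$ must coincide with $Z(E)$ and is commutative. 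If instead $E$ is infinite-dimensional over $Z(E)$, then it is too large to sit inside $Q(A)$: combining the presentation $Q(A)=Q(E[b_0;\mu])$ with the growth bounds of \cite{Be2} shows that such an $E$ would force $\mathrm{GKdim}(A)\ge 3$, a contradiction. Hence $E$ is commutative. Finally transcendence degree zero is impossible, for then $E=k$ and $Q(A)=k(b_0)$ would be commutative, making $A$ satisfy a polynomial identity; so $E$ is a field of transcendence degree exactly one. The genuinely delicate point here is the infinite-dimensional case, where finite generation of $A$ inside $E[b_0;\mu]$ is exactly what \cite{Be2} exploits.

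Finally, with $E$ commutative in hand I would descend from $b_0$ to an element of $A$. Writing $b_0=b\,\delta(b)^{-1}$ as in Lemma~\ref{lem: 1} and setting $c=\delta(b)\in E^{\times}$, I take $y=b\in A\setminus E$, so that $y=cb_0+\mu(c)$ has $b_0$-degree one with invertible leading coefficient $c$. Commutativity of $E$ gives $[y,x]=b_0[c,x]+\mu(x)c=\mu(x)c\in E$ for all $x\in E$, which is~(1), and $\mu'(x):=[y,x]$ is a derivation of $E$. Since $y=cb_0+\mu(c)$ is a triangular, invertible-leading change of variable, the left $E$-spans of $\{y^{\,i}\}$ and $\{b_0^{\,i}\}$ coincide; thus $A\subseteq\sum_{i\ge 0}Ey^{\,i}$, giving~(2), the subalgebra generated by $E$ and $y$ equals $E[b_0;\mu]$ and is isomorphic to $E[T;\mu']$ via $y\mapsto T$, giving~(3), and $Q(A)=Q(E[T;\mu'])$, giving~(4).
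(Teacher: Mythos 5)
Your proof follows the same skeleton as the paper's: use Lemma~\ref{lem: 1} to produce an element with $\widehat{\delta}$-image $1$, check that commutation with it preserves $E$, prove left $E$-linear independence of its powers by the same ``apply $\widehat{\delta}$ to a minimal relation'' trick, and then show $A\subseteq\sum_i Eb_0^i$ by integration (your explicit antiderivative $\sum \frac{1}{i+1}e_ib_0^{i+1}$ is the same computation as the paper's $a_0=\sum\frac{(-1)^i}{i!}\delta^{i+1}(a)b^i$, just organized differently). The cosmetic difference is order of operations: the paper starts from $y\in A$ with $\delta^2(y)=0\neq\delta(y)$ and passes to $b=\delta(y)^{-1}y$, whereas you start from $b_0$ and descend to $y=b$ at the end via the triangular change of variables $y=cb_0+\mu(c)$; both are fine, and your explicit exclusion of transcendence degree zero (else $Q(A)$ is commutative) is a point the paper leaves implicit.

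The one place you genuinely diverge is the claim that $E$ is a commutative field of transcendence degree one, and that is also where your argument has a hole. The paper gets this in one line from \cite[Theorem 1.4]{Be2}, which for an algebraically closed base field and a finitely generated non-PI domain of GK dimension less than three gives the dichotomy: any division subalgebra $E$ of $Q(A)$ is either a field of transcendence degree at most one over $k$, or $Q(A)$ is finite-dimensional as a left and right $E$-vector space; Lemma~\ref{lem: 1} kills the second branch. You instead try to reconstruct this by bounding the transcendence degree of $Z(E)$, applying Tsen's theorem in the case $[E:Z(E)]<\infty$ (which is fine), and then asserting in the case $[E:Z(E)]=\infty$ that ``the growth bounds of \cite{Be2}'' force $\mathrm{GKdim}(A)\ge 3$. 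That last assertion is not an argument: it is precisely the hard content of the theorem you are trying to avoid quoting, and nothing in your write-up explains why an $E$ that is infinite-dimensional over its centre cannot sit inside $Q(A)$ with $Q(A)$ finite over $E$. You should replace the whole case analysis by the direct citation of \cite[Theorem 1.4]{Be2} together with Lemma~\ref{lem: 1}, exactly as the paper does; with that substitution the rest of your proof goes through.
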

\begin{proof} 
Note that $E$ is a division subalgebra of $D$.  Since $k$ is algebraically closed and $A$ has GK dimension less than three and does not satisfy a polynomial identity, we have that either $E$ is a field of transcendence degree at most $1$ over $k$ or $D$ is finite-dimensional as a left and right $E$-vector space \cite[Theorem 1.4]{Be2}.  Since $\delta$ is nonzero, we see that $E$ is a field by Lemma \ref{lem: 1}.   

Since $\delta$ is locally nilpotent, there exists some $y\in A$ such that $\delta(y)\not = 0$ and $\delta^2(y)=0$.  We necessarily have $\delta(y)\in E\setminus \{0\}$. Note that
if $x\in E$, then $\delta([y,x])=0$ and so $[y,x]\in E$ for all $x\in E$.  We let $b=\delta(y)^{-1}y\in Ey$.  Then $\delta(b)=1$.  Since $[y,E]\subseteq E$, we see that 
the subalgebra $B$ of $Q(A)$ generated by $E$ and $y$ is isomorphic to a homomorphic image of $E[T;\mu]$, where $\mu=[\, \cdot \, , y]$ is the derivation of $E$ constructed via commutation with $y$.  Note that $B$ is also generated by $E$ and $b$.  In fact, we see that $B\cong E[T;\mu]$, since otherwise there is some non-trivial relation $e_0+e_1b+\cdots +e_m b^m=0$ with $e_0,\ldots, e_m\in E$ and $m$ minimal.  Applying $\delta$ then gives
$$e_1+2e_2 b + \cdots +m e_m b^{m-1}=0,$$ contradicting the minimality of $m$.

We claim that $B$ contains $A$.  To see this, note that $\delta$ induces a locally nilpotent derivation of $B$ as $\delta(b)=1$ and $\delta(E)=\{0\}$.  Suppose that there is some $a\in A\setminus B$.  Then there exists some smallest natural number $m$ such that
$\delta^m(a)\in B$.  By replacing $a$ by $\delta^{m-1}(a)$, we may assume that $a\not\in B$ but $\delta(a)\in B$.  
Consider 
$$a_0:=\sum_{i=0}^{\infty} \frac{(-1)^i}{i!}\delta^{i+1}(a)b^i.$$  (Since $\delta$ induces a locally nilpotent derivation of $B$ and $\delta(a)\in B$, this sum is in fact finite and is an element of $B$.)
Observe that $$\delta(a_0) = \sum_{i=1}^{\infty} \frac{(-1)^i}{(i-1)!}\delta^{i+1}(a)b^{i-1} +
\sum_{i=0}^{\infty} \frac{(-1)^i}{i!}\delta^{i+2}(a)b^i = 
\delta(a).$$
Thus $a\in a_0+E\subseteq B$.  Finally, observe that $E$ and $T$ are in $Q(A)$ after identifying $A$ with its image in $E[T;\mu]$ and this gives (4).  The result follows.
\end{proof}
The next part of the proof involves showing that we can replace the field $E$ in the statement of Lemma \ref{lem: 2} with a finitely generated commutative $k$-algebra of Krull dimension one.  The first part of this proof is given by the following lemma.
\begin{lem} Let $k$ be a field, let $E$ be a field extension of $k$ of transcendence degree one, and let $\mu:E\to E$ be a $k$-algebra derivation of $E$.  If $A$ is a subalgebra of $E[T;\mu]$ of GK dimension strictly less than three that contains $T$ and $x,u\in (E\cap A)\setminus k$, then the sum $$\sum_{i=0}^{\infty} \mu^i(u)k[x]$$ is not direct.\label{lem: 3}
\end{lem}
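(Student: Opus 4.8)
The plan is to argue by contradiction: assume the sum $\sum_{i\ge 0}\mu^i(u)k[x]$ is direct and deduce that $A$ has GK dimension at least three. First I would pass to the finitely generated subalgebra $A_0=k\langle x,u,T\rangle\subseteq A$, which has ${\rm GKdim}(A_0)\le {\rm GKdim}(A)<3$, and fix the generating subspace $V=k\cdot 1+kx+ku+kT$. The crucial observation is that each $\mu^i(u)$ already lies in $A_0$: since $\mu$ is realized on $E$ by the commutator operation $w\mapsto [w,T]=wT-Tw$, applying this operation $i$ times to $u$ produces $\mu^i(u)$, and an easy induction shows $\mu^i(u)\in V^{i+1}$. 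Consequently the elements $\mu^i(u)\,x^j\,T^\ell$ all belong to $A_0$, and more precisely $\mu^i(u)x^jT^\ell\in V^{(i+1)+j+\ell}$.

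Next I would convert the hypothesized directness into a linear-independence statement and count. Multiplying an element of $E$ on the right by $T^\ell$ produces no lower-order terms, so $\mu^i(u)x^jT^\ell$ is already in normal form, homogeneous of $T$-degree $\ell$ with left $E$-coefficient $\mu^i(u)x^j$. Hence a vanishing combination $\sum_{i,j,\ell}c_{ij\ell}\mu^i(u)x^jT^\ell=0$ forces, for each fixed $\ell$, the coefficient $\sum_{i,j}c_{ij\ell}\mu^i(u)x^j\in E$ to vanish; writing $p_i(x)=\sum_j c_{ij\ell}x^j$ this reads $\sum_i \mu^i(u)p_i(x)=0$, so directness gives $\mu^i(u)p_i(x)=0$ for every $i$, and since $x$ is transcendental over $k$ (automatic here because $x\notin k$ and $k$ is algebraically closed) we get $c_{ij\ell}=0$ whenever $\mu^i(u)\ne 0$. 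Thus, as long as $\mu^i(u)\neq 0$ for all $i$, the family $\{\mu^i(u)x^jT^\ell\}$ is $k$-linearly independent. Counting those with $(i+1)+j+\ell\le n$ yields $\binom{n+2}{3}\sim n^3/6$ independent elements of $V^n$, so $\dim V^n$ grows at least cubically and ${\rm GKdim}(A_0)\ge 3$, the desired contradiction.

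The step I expect to be the main obstacle is the degenerate possibility that $\mu^i(u)=0$ for all large $i$, equivalently that only finitely many summands are nonzero: then the count above produces only quadratically many independent elements and yields no contradiction, so this case genuinely needs separate treatment (note that when $\mu^N(u)=0$ one has $\mu^{N-1}(u)\in\ker\mu$, which changes the nature of the argument, and it is exactly the ambient hypotheses on $A$ that must rule out or handle such $u$). A secondary point, already used above, is that one truly needs $x$ transcendental over $k$ for the implication ``$p_i(x)=0\Rightarrow p_i=0$,'' which is where $k$ algebraically closed enters. I would also record an alternative short route valid when $E$ is finitely generated over $k$: then $x$ transcendental makes $E$ a finite extension of $k(x)$, so the infinitely many nonzero vectors $\mu^i(u)$ are $k(x)$-linearly dependent, and clearing denominators produces a nontrivial relation $\sum_i \mu^i(u)p_i(x)=0$ with $p_i\in k[x]$, which is precisely the failure of directness.
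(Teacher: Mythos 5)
Your argument is the paper's argument: the paper likewise realizes $\mu$ as commutation with $T$ (it simply puts $1,u,\mu(u),x,T$ into the generating subspace $V$, so that $\mu^i(u)\in V^i$), observes that $\{\mu^i(u)x^jT^{\ell}~:~1\le i,j,\ell\le n\}$ is a linearly independent subset of $V^{3n}$, and concludes $\dim V^{3n}\ge n^3$, contradicting ${\rm GKdim}(A)<3$. The one thing left to settle is the obstacle you flag, and the answer is that it needs no separate treatment: it is absorbed into the meaning of ``direct.'' The lemma is only true (and is only usable in Proposition \ref{prop: 1}) under the strong reading in which ``the sum is direct'' means the family $\{\mu^i(u)x^j\}_{i,j\ge 0}$ is $k$-linearly independent, equivalently no $\mu^d(u)$ lies in $\sum_{i<d}\mu^i(u)k(x)$; under the literal submodule reading, where zero summands are harmless, the statement already fails for the Weyl algebra $k\langle t,T\rangle\subseteq k(t)[T;d/dt]$ with $u=t$ and $x=t^2$, since $\mu^2(u)=0$ and $tk[t^2]+k[t^2]$ is genuinely direct. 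That the strong reading is the intended one is confirmed by how the lemma is invoked later: the conclusion extracted there is precisely ``$\mu^d(a)\in\sum_{i<d}\mu^i(a)k(x)$ for some $d$,'' which holds trivially when $\mu^d(a)=0$. So your ``degenerate possibility'' is by definition an instance of ``not direct,'' and your generic case is the entire proof. Your second caveat is also well taken: the independence of the powers $x^j$ does require $x$ to be transcendental over $k$, which the paper uses silently; note, though, that the lemma as stated assumes only that $k$ is a field, so transcendence of $x$ is really an unstated hypothesis here rather than a consequence of algebraic closure (in the application $k$ is algebraically closed, so no harm is done).
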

\begin{proof} Suppose that $\sum_{i=0}^{\infty} \mu^i(u)k[x]$ is direct.  Let $V$ denote the $k$-vector subspace of $A$ spanned by $1,u,\mu(u),x,T$.  Since $A$ has GK dimension strictly, less than $3$, there is some $\epsilon>0$ such that ${\rm dim}(V^n)<n^{3-\epsilon}$ for all $n$ sufficiently large.
Note that $\sum_{i=0}^{\infty} ET^i$ is direct and $\mu^i(u)\in V^{i}$ for every $i$ and hence for each $n$, the set
$$\{\mu^i(u)x^jT^{\ell}~:~1\le i,j,\ell\le n\}$$ is a linearly independent subset of $V^{3n}$.  But this gives ${\rm dim}(V^{3n})\ge n^3$ for all $n$, a contradiction.
\end{proof}

\begin{prop} Let $k$ be an algebraically closed field of characteristic $0$ and let $A$ be a finitely generated $k$-algebra that is a domain of GK dimension less than three.  If $A$ has a nonzero locally nilpotent derivation $\delta$ then either $A$ satisfies a polynomial identity or there is a finitely generated commutative $k$-algebra $C$ of Krull dimension $1$ and a derivation $\mu$ of $C$ such that $A$ is isomorphic to a subalgebra of $C[T;\mu]$ and $A$ is birationally isomorphic to $C[T;\mu]$.
\label{prop: 1}
\end{prop}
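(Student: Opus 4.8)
The plan is to start from Lemma~\ref{lem: 2}: assuming $A$ does not satisfy a polynomial identity, we may regard $A$ as a subalgebra of $E[T;\mu]$ with $T=y\in A$, where $E\subseteq Q(A)$ is a field of transcendence degree one over $k$ and $Q(A)=Q(E[T;\mu])$. The birational half of the statement is then essentially free: any commutative subalgebra $C\subseteq E$ with $\mu(C)\subseteq C$ yields $A\subseteq C[T;\mu]\subseteq E[T;\mu]$, and sandwiching quotient division rings gives $Q(A)\subseteq Q(C[T;\mu])\subseteq Q(E[T;\mu])=Q(A)$, so $Q(C[T;\mu])=Q(A)$. Thus the whole problem reduces to producing a finitely generated, $\mu$-stable subalgebra $C$ of $E$ of Krull dimension one that contains the finitely many coefficients $c_{ji}\in E$ occurring when a fixed finite generating set $a_1,\dots,a_s$ of $A$ is expanded in the left $E$-basis $\{T^i\}$ of $E[T;\mu]$.

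First I would locate a transcendental element inside $E\cap A$. Write $R:=E\cap A=\ker(\delta|_A)$; one checks $\mu(R)\subseteq R$, since $\mu(r)=[r,y]\in E\cap A$ for $r\in R$. If $R=k$, then choosing $a\in A$ with $\delta(a)\ne 0=\delta^2(a)$ forces $\delta(a)\in R\setminus\{0\}=k^{\times}$, so after rescaling $\delta$ has a slice $b\in A$ with $\delta(b)=1$; the usual slice argument (peeling off leading terms $\tfrac{1}{d!}\delta^d(a')\,b^d$, all of whose ingredients lie in $A$) then shows $A=k[b]$, which is commutative and hence satisfies a polynomial identity, contrary to hypothesis. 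Hence $R\supsetneq k$, and since $k$ is algebraically closed and $E$ has transcendence degree one, any $x\in R\setminus k$ is transcendental over $k$.

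The heart of the argument is to manufacture a finitely generated field $L$ with $k(x)\subseteq L\subseteq E$, $[L:k(x)]<\infty$, $\mu(L)\subseteq L$, and $c_{ji}\in L$ for all $i,j$. Here Lemma~\ref{lem: 3}, applied with $u=x$, is indispensable: it yields a nontrivial relation $\sum_{i=0}^{N}p_i(x)\mu^i(x)=0$ with $p_i\in k[x]$ and $p_N\ne 0$, whence $\mu^N(x)$ lies in the $k(x)$-subalgebra $E_x:=k(x)[\mu(x),\dots,\mu^{N-1}(x)]$. A short computation shows $E_x$ is $\mu$-stable, and being a finitely generated $k(x)$-algebra that is a domain algebraic over $k(x)$ it is a finite field extension of $k(x)$. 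Each coefficient $c_{ji}$ is algebraic over $k(x)$ (transcendence degree one), so $L:=E_x(c_{ji}:i,j)$ is finite over $k(x)$; that $\mu(L)\subseteq L$ follows by differentiating the minimal polynomial of each generator over $k(x)$ and solving for its $\mu$-image, using $\mu(x)\in E_x\subseteq L$. I expect this step---pinning down that the $\mu$-orbit of $x$ generates only a finite extension of $k(x)$, and that adjoining the coefficients preserves both finiteness and $\mu$-stability---to be the main obstacle, and it is precisely where the hypothesis ${\rm GKdim}(A)<3$ enters, through Lemma~\ref{lem: 3}.

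Finally I would pass to geometry. Since $L$ is a finitely generated field of transcendence degree one over the algebraically closed field $k$, it is the function field of a unique smooth projective curve $Y$; the derivation $\mu$ is a rational vector field on $Y$ and the finitely many elements $x,c_{ji}$ are rational functions, so together they have a finite pole set $\Sigma$, which is nonempty because $x$ is nonconstant. Setting $C:=\mathcal{O}(Y\setminus\Sigma)$ gives a finitely generated commutative domain of Krull dimension one with fraction field $L$; it contains every $c_{ji}$, and it is $\mu$-stable because at each point outside $\Sigma$ both $\mu$ and the relevant functions are regular, so $\mu$ carries functions regular off $\Sigma$ to functions regular off $\Sigma$. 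Then $A\subseteq C[T;\mu]$, and the birational conclusion follows from the sandwiching noted at the outset, completing the proof.
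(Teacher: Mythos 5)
Your argument is correct, but it reaches the ring $C$ by a genuinely different route from the paper's. The paper never passes to a projective model: starting from $A\subseteq E[T;\mu]$ with $T\in A$ and $x\in(E\cap A)\setminus k$, it uses the inner derivation $\gamma=[\,\cdot\,,x]$ and the element $z=[T,x]\in E\cap A$ to show that every coefficient $c$ of every generator of $A$ satisfies $cz^n\in A$ for some $n$; this places the finite set $z^nS$ of modified coefficients inside $A$, so that Lemma \ref{lem: 3} can be applied to the subalgebra of $A$ generated by $z^nS$, $x$, $z$, $T$, producing a single $d$ and a polynomial $f(x)$ with $\mu^d(a)f(x)\in\sum_{i<d}\mu^i(a)k[x]$ for each $a$ in that finite set; the ring $C$ is then the algebra generated by $1/z$, $1/f(x)$ and the finitely many $\mu^i(a)$, and $\mu$-stability is verified by direct computation. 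You instead adjoin the coefficients to a finite extension $L$ of $k(x)$ and take $C=\mathcal{O}(Y\setminus\Sigma)$ on the smooth projective model $Y$ of $L$, which is cleaner and equally valid. Two observations. First, your appeal to Lemma \ref{lem: 3} is actually redundant: once Lemma \ref{lem: 2} gives that $E$ has transcendence degree one and $x$ is transcendental over $k$, every element of $E$ --- in particular $\mu(x)$ and every $c_{ji}$ --- is already algebraic over $k(x)$, so $L:=k(x)\bigl(\mu(x),\{c_{ji}\}\bigr)$ is finite over $k(x)$ with no growth input at all, and your minimal-polynomial computation (which needs only $\mu(x)\in L$ and separability, automatic in characteristic $0$) gives $\mu(L)\subseteq L$. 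Thus in your version the hypothesis ${\rm GKdim}(A)<3$ enters only through Lemma \ref{lem: 2}, not through Lemma \ref{lem: 3} as you assert; the detour through $E_x$ is harmless but does no work. Second, the paper's more laborious construction buys extra information: the step $cz^n\in A$ shows that the coefficients themselves nearly live in $A$, which is what underlies the remark following the proof that $A$ is sandwiched between an idealizer subring of a skew polynomial ring and the skew polynomial ring itself; your $C$, built purely inside the function field, does not come with any such containment.
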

\begin{proof}  Assume that $A$ does not satisfy a polynomial identity and let $D$ denote the quotient division ring of $A$.  Then $\delta$ extends to a derivation from $D$ to $D$.  Let $E\subseteq D$ denote the kernel of $D$.  

By Lemma \ref{lem: 2}, we have that $E$ is a field extension of $k$ of transcendence degree one and $A$ is isomorphic to a subalgebra of $E[T;\mu]$ that contains $T$.  We identify $A$ with its image in $E[T;\mu]$.  A straightforward argument shows that we can pick $x\in (A\cap E)\setminus k$, since otherwise, we could pick $a\in A$ such that $\delta(a)=1$ and could show by induction that every element in $A$ is a polynomial in $x$, contradicting the fact that $A$ does not satisfy a polynomial identity.   

By assumption $T\in A$, and we let
$z=[T,x]\in E\cap A$ and let $\gamma:A\to A$ denote the locally nilpotent derivation $[\, \cdot \, , x]$.

Suppose that $P(T)=c_0+\cdots + c_m T^m\in A$.  We claim that for each $i$, there is some natural number $n$ such that $c_i z^n\in A$.  To see this, suppose the claim is not true and let $j$ be the largest integer in $\{0,\ldots ,m\}$ for which the claim does not hold.  Then there is some $n$ such that $c_i z^n\in A$ for $i>j$.  Then $$Q(T):=z^n P(T) - \sum_{i>j} z^n c_i T^i = \sum_{i\le j} z^n c_i T^i\in A.$$

Note that $\gamma^j(Q(T)) =z^n c_j z^j = c_j z^{n+j}\in A$, contradicting the assumption that the claim does not hold for $c_j$.  

Let $P_1(T),\ldots ,P_m(T)\subseteq E[T;\mu]$ be generators for $A$ and let $S\subseteq E$ denote the set of all coefficients which occur in $P_1,\ldots ,P_m$.  Then we have shown that there is some $n$ such that
$$z^nS\subseteq A.$$

Let $B$ denote the subalgebra of $A$ generated by $z^nS, x, z$ and $T$.  Then $B$ has GK dimension strictly less than three.  Furthermore, by Lemma \ref{lem: 3} there exists some natural number $d$ such that
$$\mu^d(a)\in \sum_{i<d} \mu^i(a)k(x)$$ for all $a\in z^nS\cup \{x,z\}$.  It follows that there is some polynomial $f(x)$ such that
$$ \mu^d(a)f(x) \in \sum_{i<d} \mu^i(a)k[x]$$ for all $a\in z^nS\cup \{x,z\}$.
Let $C$ denote the commutative subalgebra of $E$ generated by
$1/z, 1/f(x)$ and $$\{\mu^i(a)~:~0\le i<d, a\in z^nS\cup \{x,z\}\}.$$
  
We first claim that $\mu$ restricts to a derivation of $C$.  To see this, note that
$$\mu(1/z)=-z^{-2}\mu(z)\in C,\qquad \mu(f(x)^{-1})=-f(x)^{-2} f'(x) z \in C,$$ and for $$a\in z^nS\cup \{x,z\}$$
and $0\le i<d$  we have
$\mu(\mu^i(a))=\mu^{i+1}(a) \in S$ if $i<d-1$ and if $i=d$, we have
$\mu(\mu^i(a))\in \sum_{j<d} \mu^j(a)k[x]f(x)^{-2}\subseteq C$. 

Let $c_1,\ldots, c_p$ denote the elements in the generating set $1/z, 1/f(x)$ and $$\{\mu^i(a)~:~0\le i<d, a\in z^nS\cup \{x,z\}\}.$$  Then given an element $c\in C$, we may write $c=g(c_1,\ldots ,c_p)$, for some polynomial $g(x_1,\ldots ,x_p)\in k[x_1,\ldots ,x_p]$.  Then $$\mu(c) = \sum_{i=1}^p \frac{\partial}{\partial x_i} g(x_1,\ldots ,x_p)|_{x_1=c_1,\ldots ,x_p=c_p} \cdot \mu(c_i) \in C.$$  Thus $\mu$ restricts to a derivation of $C$.  Furthermore $S\subseteq C$ and so $A\subseteq C[T;\mu]$.   

Since $A\subseteq C[T;\mu]\subseteq E[T;\mu]$ and $A$ and $E[T;\mu]$ have the same quotient division algebra, we see that $A$ and $C[T;\mu]$ have the same quotient division algebra.  The result follows.
\end{proof}
We observe that in this proof, we actually show that $k+z^nC[y;\mu]\subseteq A$ and so $A$ is sandwiched between an idealizer subring of a skew polynomial ring and a skew polynomial ring over a finitely generated commutative domain.
\begin{proof}[Proof of Theorems \ref{thm: main2} and \ref{thm: main1}]
By Proposition \ref{prop: 1} either $A$ satisfies a polynomial identity or there is a finitely generated commutative $k$-algebra $C$ of of Krull dimension $1$ and a derivation $\mu$ of $C$ such that $A$ is isomorphic to a subalgebra of $C[T;\mu]$ and $A$ is birationally isomorphic to $C[T;\mu]$.

To obtain Theorem \ref{thm: main1}, note that if $A$ satisfies a polynomial identity then it must have quadratic growth (cf. McConnell and Robson \cite[Proposition 13.10.6]{McR}).  If, on the other hand, $A$ is isomorphic to a subalgebra of $C[T;\mu]$ with $C$ a finitely generated algebra commutative algebra of Krull dimension $1$, then it follows from a result of Zhang \cite{Z} that any finitely generated subalgebra of $C[T;\mu]$ has GK dimension $2$.  In fact, if one follows Zhang's proof carefully, it can be checked that the growth is in fact quadratic.

To obtain Theorem \ref{thm: main2} note that there is an affine curve $X$ such that the ring of regular functions on $X$ is precisely $C$.  Since the singular set of $X$ is finite, $X$ has a smooth open affine subset $U$.   Furthermore, there is an affine open subset $Y$ of $U$ such that the module of derivations of $k[Y]$ is a free $k[Y]$-module of rank one (cf. Hartshorne \cite[Theorem 8.15]{Har}) and $Y$ is again a smooth irreducible affine curve.

Since derivations extend to localizations, we see $\mu$ extends naturally to $k[Y]$, which is a localization of $C$.  If we let $\delta_0$ denote a generator for the $k[Y]$-module of derivations of $k[Y]$, then we see that $\mu=c\delta_0$ for some nonzero $c\in k[Y]$.  Hence $A$ is isomorphic to a subalgebra of $\mathcal{D}(Y)\cong k[Y][T;\delta_0]$, the ring of differential operators on $Y$, and the two algebras are birationally isomorphic as we have only inverted elements of $C$ to obtain this ring of differential operators.  The result follows. 
\end{proof} 
\section{Concluding remarks and questions}
In this section, we make a few remarks and pose some related questions that we find interesting.

The first remark we make is that if $k$ is an algebraically closed field of characteristic $0$ and $A$ is a finitely generated $k$-algebra that is a domain of GK dimension strictly less than three, then we have shown that if $A$ has a nonzero locally nilpotent derivation then either $A$ satisfies a polynomial identity or there is a smooth irreducible affine curve $X$ and derivation $\mu:k[X]\to k[X]$ such that $A$ is isomorphic to a subalgebra of $k[X][T;\mu]$ that contains $T$.  We note that if we identify $A$ with its image in $k[X][T;\mu]$  and we pick $x\in (k[X]\cap A)\setminus k$ then $[x,T]=z\in k[X]$.  If we take the affine open subset $U$ of $X$ on which $z$ does not vanish, then $\mu$ extends to a derivation of $k[U]$ and 
$k[U][T;\mu]$ is a ring of quadratic growth and has elements $z^{-1}T,x$ whose commutator is $1$.  In particular, it has a subalgebra that is isomorphic to the Weyl algebra.

In light of this remark, we pose the following question.
\begin{quest} Let $k$ be an algebraically closed field of characteristic $0$ and let $A$ be a finitely generated $k$-algebra that is a domain of quadratic growth that is birationally isomorphic to a ring of differential operators on an affine curve over $k$.  Does there exist a finitely generated subalgebra $B$ of $Q(A)$ of quadratic growth that contains $A$ and has the property that the Weyl algebra is a subalgebra of $B$?
\end{quest}
We note that if $X$ is a smooth curve over an algebraically closed field of characteristic zero, then the quotient division ring of $\mathcal{D}(X)$ always contains a copy of the Weyl algebra.  We thus pose the following question.
\begin{quest} Can one classify all pairs $(X,Y)$ of smooth affine complex curves such that 
$\mathcal{D}(X)$ is isomorphic to a subalgebra of the quotient division ring of $\mathcal{D}(Y)$?
\end{quest}
We note that the Riemann-Hurwitz formula shows that if $X$ and $Y$ are smooth irreducible projective curves and $f:Y\to X$ is a surjective morphism, then the genus of $X$ is less than or equal to the genus of $Y$.  In light of this fact, we make the following conjecture.
\begin{conj} If $X$ and $Y$ are smooth curves such that $\mathcal{D}(X)$ is isomorphic to a subalgebra of the quotient division ring of $\mathcal{D}(Y)$, then the genus of $X$ is less than or equal to the genus of $Y$.
\end{conj}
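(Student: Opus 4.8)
The plan is to reduce the conjecture to the Riemann--Hurwitz formula by extracting, from a hypothetical embedding $\iota\colon \mathcal{D}(X)\hookrightarrow D:=Q(\mathcal{D}(Y))$, a nonconstant morphism $\overline{Y}\to\overline{X}$ between the smooth projective models of the two curves. Once such a morphism is in hand, separability (automatic in characteristic zero) together with Riemann--Hurwitz gives $2g(\overline{Y})-2 \ge n\,(2g(\overline{X})-2)$, where $n\ge 1$ is the degree, and this forces $g(X)\le g(Y)$ after disposing of the trivial case $g(X)=0$. So the entire problem is to manufacture such a morphism, equivalently a $k$-algebra embedding $k(X)\hookrightarrow k(Y)$, from the ring embedding $\iota$.

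First I would restrict $\iota$ to the ring of regular functions. Since $k[X]$ is commutative and $\iota$ is injective, $\iota(k[X])$ is a commutative subdomain of the division ring $D$, so its fraction field gives an embedding $k(X)\hookrightarrow D$ of the function field. The substantive task is then to show that this copy of $k(X)$ lies, up to $k$-isomorphism, inside the distinguished subfield $k(Y)\subseteq D$ coming from the presentation $D=Q\big(k(Y)[T;\delta]\big)$ furnished by the smooth model chosen as in the proof of Theorem~\ref{thm: main2}.

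The mechanism I would use is the derivation structure, which is what distinguishes $\mathcal{D}(X)$ from a bare function field. Inside $\mathcal{D}(X)$ the regular functions are exactly the elements $a$ for which $\mathrm{ad}(a)=[a,\,\cdot\,]$ is locally nilpotent, and $\mathcal{D}(X)$ is generated over $k[X]$ by a single operator $\partial$ satisfying $[\partial,f]=\delta_0(f)$. Transporting this through $\iota$, the images $\iota(f)$ are ad-locally-nilpotent on $\iota(\mathcal{D}(X))$, and the image of $\partial$ gives an element $P\in D$ whose commutator with $\iota(k[X])$ reproduces the derivation $\delta_0$. I would try to match $P$ with the differential structure of $D$---concretely, to show that conjugating $\iota(k(X))$ by a suitable element of $D$ carries it into $k(Y)$ and carries the derivation induced by $P$ into a multiple of $\delta$---so that the comparison of the two first-order differential structures yields a genuine dominant rational map $\overline{Y}\dashrightarrow\overline{X}$, hence a morphism of projective models.

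The hard part, and the reason this remains a conjecture, is the control of transcendence-degree-one subfields of $D$. A priori $D$ may contain many non-isomorphic function fields---this already happens for the Weyl field $Q(\mathcal{D}(\mathbb{A}^1))$---so the field embedding $k(X)\hookrightarrow D$ alone cannot force $g(X)\le g(Y)$, and one must genuinely exploit that the \emph{whole} ring $\mathcal{D}(X)$, together with its canonical locally nilpotent inner derivations, embeds. Showing that this extra rigidity confines $\iota(k(X))$ to a finite extension of $k(Y)$, and that such a finite extension corresponds to an honest finite morphism of curves rather than merely an abstract field inclusion, is the main obstacle; I expect it to require a classification of the ad-nilpotent elements of $D$ finer than anything established in the present paper.
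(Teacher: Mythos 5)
The statement you are trying to prove is presented in the paper as a \emph{conjecture}: the authors offer no proof, only the motivating observation that the Riemann--Hurwitz formula gives $g(X)\le g(Y)$ whenever there is a surjective morphism of smooth projective curves $\overline{Y}\to\overline{X}$. Your proposal reproduces exactly that motivation and then, as you yourself acknowledge in the final paragraph, leaves open the one step that actually constitutes the problem: extracting from the ring embedding $\iota\colon\mathcal{D}(X)\hookrightarrow Q(\mathcal{D}(Y))$ a dominant map $\overline{Y}\dashrightarrow\overline{X}$, i.e.\ showing that $\iota(k(X))$ lands (up to conjugation) inside $k(Y)$ or a finite extension of it realized by an actual morphism of curves. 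Everything before that step is conditional on it, and everything after it is routine, so what you have written is a statement of strategy rather than a proof; the conjecture remains a conjecture.

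There is also a concrete flaw in the mechanism you propose for carrying out the missing step. The claim that inside $\mathcal{D}(X)$ the regular functions are \emph{exactly} the elements $a$ with $\mathrm{ad}(a)$ locally nilpotent fails already for $X=\mathbb{A}^1$: in the Weyl algebra with $[\partial,x]=1$, the element $\partial$ is ad-locally nilpotent but is not a regular function, so ad-nilpotence does not canonically recover $k[X]$ from the abstract ring, let alone pin down $\iota(k(X))$ inside $D$. This is precisely the difficulty the paper flags: $Q(\mathcal{D}(Y))$ contains many transcendence-degree-one subfields (indeed a copy of the quotient ring of the Weyl algebra for every smooth $Y$), so no argument that only uses the field $k(X)$, or only the first-order part of $\mathcal{D}(X)$, can succeed. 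Any genuine proof would need a rigidity statement about embeddings of the full ring $\mathcal{D}(X)$ --- a classification of ad-nilpotent elements or of maximal commutative subfields of $Q(\mathcal{D}(Y))$ --- that is not established here or in the paper.
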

We note that Theorem \ref{thm: main2} shows that if $k$ is an algebraically closed field of characteristic zero and $A$ is a finitely generated domain over $k$ of GK dimension less than three and that does not satisfy a polynomial identity and has a nonzero locally nilpotent derivation, then $A$ is a subalgebra of a ring of differential operators, which is noetherian.  Growth considerations show that this embedding of $A$ must be fairly tight, in some sense.  We thus make the following conjecture.
\begin{conj}  Let $k$ be an algebraically closed field of characteristic zero and let $A$ be a finitely generated $k$-algebra that is a domain of quadratic growth.  Suppose, in addition, that $A$ does not satisfy a polynomial identity and that $A$ has a nonzero locally nilpotent derivation.  Then $A$ is noetherian.
\end{conj}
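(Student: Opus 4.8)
The plan is to combine the structural description furnished by Theorem~\ref{thm: main2} and Proposition~\ref{prop: 1} with the theory of idealizers in noetherian rings. Since $A$ does not satisfy a polynomial identity and carries a nonzero locally nilpotent derivation, Theorem~\ref{thm: main2} lets me identify $A$ with a finitely generated subalgebra of $R:=\mathcal{D}(X)\cong k[X][T;\delta_0]$ for a smooth irreducible affine curve $X$, with $A$ birationally isomorphic to $R$. The ring $R$ is an Ore extension of the commutative one-dimensional noetherian ring $k[X]$ by a derivation, hence $R$ is noetherian. Moreover, as recorded in the remark following Proposition~\ref{prop: 1}, there is a nonzero $z\in k[X]$ and an integer $n\ge 1$ realising $A$ as squeezed between the idealizer subring attached to $z^n$ and $R$; equivalently, after inverting $z$ one has $A[1/z]=R[1/z]=\mathcal{D}(U)$, where $U\subseteq X$ is the cofinite open locus on which $z$ does not vanish. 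Thus $A$ and the noetherian ring $R$ agree away from the finite set $V(z)$, and the whole difficulty is concentrated at those finitely many points.

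The first reduction I would carry out is a saturation argument. Given an ascending chain $\mathfrak a_1\subseteq\mathfrak a_2\subseteq\cdots$ of right ideals of $A$, the extended chain $\mathfrak a_i\,R[1/z]$ stabilises because $R[1/z]=\mathcal{D}(U)$ is noetherian; say it is constant for $i\ge N$. Then each $\mathfrak a_i$ with $i\ge N$ is trapped between $\mathfrak a_N$ and its $z$-saturation $\widetilde{\mathfrak a}_N:=\mathfrak a_N R[1/z]\cap A$, and the quotient $\widetilde{\mathfrak a}_N/\mathfrak a_N$ is a finitely generated $z$-torsion right $A$-module supported on $V(z)$. Hence noetherianity of $A$ reduces to the single assertion that \emph{every finitely generated $z$-torsion right $A$-module satisfies the ascending chain condition}. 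The analogue for left ideals follows symmetrically, using a nonzero element of $k[X]$ that clears denominators on the left, so it suffices to treat one side.

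To prove this torsion statement I would bring in idealizers. Writing $J=z^n R$, a right ideal of $R$, and $S=\mathbb{I}_R(J)=\{r\in R:rJ\subseteq J\}$ for its idealizer, one checks that $J$ is a two-sided ideal of $S$ and that $A$ lies in $S$; the large right ideal of $R$ contained in $A$ that is exhibited in the remark after Proposition~\ref{prop: 1} then gives a common ideal along which to compare $A$, $S$ and $R$. The key input is Robson's theorem on idealizers (see McConnell--Robson~\cite{McR}), which guarantees that $S$ is noetherian once $R/J$ is finitely generated as a right $S$-module; granting this, one controls $z$-torsion $S$-modules, and the standard decomposition of right ideals along a two-sided ideal (analysing $\mathfrak a$ through $\mathfrak a\cap J$ and $(\mathfrak a+J)/J$) transfers the ascending chain condition from $S$ and from $R/J$ down to $A$. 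Quadratic growth of $A$ should enter here to bound the $z$-torsion, forcing the finite-dimensional layers that measure the defect between $A$ and $R$ over each point of $V(z)$ to grow at most linearly, which is what I expect to yield the required finite generation of $R/J$.

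The step I expect to be the main obstacle is exactly the verification of the finite-generation hypothesis in Robson's theorem --- that $R/z^n R$ is finitely generated as a right module over the idealizer $S$ (and that $z^n R$ is noetherian as a right $A$-module). This module is infinite-dimensional over $k$ even in the simplest case $R=A_1$, $z=x$; indeed $R$ is then simple, so there is \emph{no} nonzero two-sided ideal of $R$ inside $A$ to fall back on, and the argument cannot be purely ideal-theoretic but must exploit the curve geometry at the points of $V(z)$ together with the quadratic growth of $A$. There is also a genuine risk of circularity, since the natural way to establish finite generation over $A$ is to invoke noetherianity of $A$, which is precisely what is to be proved; breaking this loop --- presumably by first establishing noetherianity of the idealizer $S$, which is accessible through \cite{McR}, and only then descending to the sandwiched algebra $A$ --- is the crux of the matter.
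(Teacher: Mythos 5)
This statement is posed in the paper as a \emph{conjecture}: the authors do not prove it, and it is listed among the open problems in the concluding section. So there is no proof in the paper to compare yours against, and the real question is whether your argument closes the conjecture. It does not. You have correctly assembled the relevant structural data --- the sandwich $k+z^nC[T;\mu]\subseteq A\subseteq C[T;\mu]$ from the remark after Proposition~\ref{prop: 1}, the noetherianity of $C[T;\mu]$ and of its localization at the powers of $z$, and the idea that the failure of noetherianity could only be concentrated on the $z$-torsion --- but the decisive step, which you yourself flag as ``the main obstacle,'' is left unproved. Robson's idealizer theorem requires verifying that $R/J$ is finitely generated over the idealizer (and one must also place $A$ correctly relative to that idealizer: $J=z^nR$ is a right ideal of $R=C[T;\mu]$ but not a two-sided ideal of $A$ in general, and as you note, when $R$ is simple there is no nonzero two-sided ideal of $R$ inside $A$ at all). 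Identifying an obstacle is not the same as overcoming it, so what you have is a plausible strategy, not a proof.

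There are also two concrete soft spots in the reduction itself. First, in the saturation argument the quotient $\widetilde{\mathfrak a}_N/\mathfrak a_N$ is asserted to be a \emph{finitely generated} $z$-torsion module, but $\widetilde{\mathfrak a}_N=\mathfrak a_N R[1/z]\cap A$ has no reason to be finitely generated over $A$ unless $A$ is already known to be noetherian --- this is exactly the circularity you worry about later, and it already infects the first reduction. Second, for $u\in\widetilde{\mathfrak a}_N$ one only gets $uz^m\in\mathfrak a_N R$ for some $m$, not $uz^m\in\mathfrak a_N$, since $\mathfrak a_N$ is an ideal of $A$ and not of $R$; so the quotient is not literally $z$-torsion over $A$ in the sense your subsequent analysis requires. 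Both points would need repair even if the finite-generation input to Robson's theorem were granted. The conjecture remains open as far as this paper is concerned, and your proposal should be read as a reasonable plan of attack rather than a solution.
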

One of the interesting phenomena that occurs in noncommutative algebra is that results for algebras of some GK dimension $d$ often have counterparts for graded algebras of GK dimension $d+1$.  We recall that in a graded Goldie domain $A$ one can invert the nonzero homogeneous elements to obtain the graded quotient division ring of $A$,  $$Q_{\rm gr}(A)\cong D[t,t^{-1};\sigma],$$ where $D$ is a division ring and $\sigma$ is an automorphism of $D$.   
\begin{quest} Let $A$ be a connected graded finitely generated complex domain of GK dimension $3$ and suppose that $A$ has a nonzero locally nilpotent derivation.  Then $Q_{\rm gr}(A)\cong D[t,t^{-1};\sigma]$ for some division ring $D$ and automorphism $\sigma$ of $D$.  Can one describe the set of $D$ that can occur under these hypotheses?  More generally, can one describe which pairs $(D,\sigma)$ can occur under these hypotheses?
\end{quest}
\section*{Acknowledgments}
We thank Tom Lenagan and Vladimir Bavula for many helpful comments and suggestions.

\end{document}